\newcommand{\cat}[1]{\textbf{#1}} %carattere usato per il nome di una categoria
\newcommand\TLTS{2LTS}
\begin{document}

\title{The parallel composition of processes}

\author{L. de Francesco Albasini \and N. Sabadini \and R.F.C. Walters
\thanks{The authors gratefully acknowledge financial support from 
the  Universit\'a dell'Insubria,  and the Italian Government PRIN project ART ({\em Analisi di sistemi di Riduzione mediante sistemi di Transizione)}.}}
\institute{Dipartimento di Scienze delle Cultura,\\
Politiche e dell'Informazione,\\
Universit\`a dell' Insubria, Italy}
\date{}

  \maketitle
 % \tableofcontents

\hyphenation{Milner} \hyphenation{ope-ra-tions} \hyphenation{diffe-rent}
% non spezza la parola a fine riga

\begin{abstract}
We suggest that the canonical parallel operation of processes is composition in a well-supported compact closed category of spans of reflexive graphs. We present the parallel operations of classical process algebras as derived operations arising from monoid objects in such a category, representing the fact that they are protocols based on an underlying broadcast communication.
\end{abstract}

%--------------------------------------------------------------------------------
\section{Introduction}

The algebraic structure of sequential operations on processes has been studied since the beginning of computer science, with recent important contributions being \cite{BE93,JSV96}. Parallel operations
have been studied with less success, with a variety of different
\emph{process algebras} arising and no real consensus on the basic
operations. In this note we would like to argue that the
well-supported compact closed category (wscc) of spans of reflexive
graphs introduced in \cite{KSW97} is in fact a \emph{canonical
algebra for parallel composition}. We will present
the particular algebras introduced by Milner \cite{M89}, Hoare \cite{H85} and others as derived operations of the wscc structure and additional commutative monoid objects  in Span(RGraph) (generalizing Winskell's synchronization algebras\ \cite{WN}). One reason previous authors have considered these derived
operations is their desire for a single point of observation of a process, which has been confused with the quite different idea of
\emph{interleaving} semantics. Another reason is that conventional process algebras
assume a form of broadcast communication between processes, exactly
achieved by the operations of the monoid object.

Our suggestion is coherent with remarks made by Abramsky in \cite{A} which we quote here. He makes these criticisms, namely that in traditional process algebras 

(i) \textquotedblleft interaction becomes extrinsic: we must add some additional structure,
typically a  \textquoteleft synchronization algebra' on the labels, which
implicitly refers to some external agency for matching up labels and
generating communication events, rather than  finding the meaning of
interaction in the structure we already have." (the extra monoid object)

(ii) \textquotedblleft interaction becomes ad hoc: because it is an \textquoteleft invented' additional
structure, many possibilities arise, and it is hard to identify any as
canonical" (non-canonicity) °°   

(iii) \textquotedblleft interaction becomes global: using names to match up communications
implies some large space in which potential communications \textquoteleft swim' " (broadcast).

We agree with these three points, while firmly disagreeing with Abramsky's acceptance in that paper that (\textquotedblleft extensional")  behaviours  should be described rather than (\textquotedblleft intensional") systems. It is our contention that there should be a common algebra for systems and behaviours, compositionality being the existence of a morphism (actual behaviour) between the two. Unfortunately this  is  lacking even in classical treatments of sequential processes - Kleene expressions describe behaviour but not automata. In  \cite{RSW08} the algebra of this paper applied to cospans rather than spans to provide an algebra of automata and behaviours for which so that the Kleene theorem is a statement of compositionality. In the case of process algebras there is no notion which corresponds to the systems, only the behaviour - but the behaviour of what? It will be clear from this paper that we are are firmly in favour of an automata semantics  of process algebras, rather than the often incompatible, but commonly accepted, structural operational semantics.  Of course, from any algebra of automata one may produce a rewrite system, as we have done for our algebra in \cite{KSW08} .

 The abstract algebra described in this paper was introduced in
\cite{CW87,W87} and developed in the context of computer science in \cite{KSW97a,KSW97b,KSW97,KSW00a,KSW00b,KSW02}.
The algebra has also found application in quantum field theory
\cite{K04} and quantum experiments \cite{AC04}.

We describe the contents of the paper in more detail. In section 2 we give the abstract result that a pair of objects $X$, $Y$ in a symmetric monoidal category, $X$ with a comonoid structure and $Y$ with a monoid structure induce a monoid structure on $Hom(X,Y)$. 

In section 3 we introduce a simpler variant $\TLTS$  of the category $Span(RGraph)$, whose arrows are $\emph{two-sided labelled transition systems,}$ by which we mean spans of reflexive graphs which are jointly monic on arcs - there is at most one arc between two states with a given (double) labelling. This  has  the same algebraic structure as $Span(RGraph)$, but is more convenient in making comparison with classical labelled transition systems. We describe briefly the algebraic structure.

In section 4 we note that the synchronization algebras of Winskel \cite{WN} are particular commutative semigroups in $\TLTS$, and we show that the parallel composition of transition systems in \cite{WN} is exactly our construction of section 2.     Notice that familiar examples of synchronization algebras  are merely semigroups in the setting of \cite{WN}, but considered in $\TLTS$ they are actually monoids with the results that processes under the associated parallel operations are monoids, not merely semigroups.   
 Notice that the usual interpretation that processes have many channels is seen in our analysis to be misleading - in fact processes communicate on a single channel or bus, the mediation with this bus is provided by the monoid object. This is made particularly clear by the geometry corresponding to the algebra. Naturally, on the bus there is interleaving but  by no means are the various processes acting in interleaving internally. A further technicality which has lead to the confusion between the fact that processes interact through a bus and interleaving semantics is the lack of appreciation of the reflexive graphs. In \cite{WN} it is prohibited that the synchronization of two actions results in the null action $\varepsilon$. This means that internal actions are always mirrored on the bus.

Further comments on process algebras by the second and third authors may be found at \cite{W08}.

%----------------------------------------------------------- ---------------------------------
\section{Monoid objects}

A \emph{commutative monoid object} in a symmetric strict monoidal category
\cat{M} (with tensor $\otimes$, identity of tensor $I$, and symmetry $tw$) consists of an object $M$  with arrows
\begin{displaymath}
e:I \to M,\qquad m: M \otimes M \to M
\end{displaymath}
such that the following diagrams commute:

\begin{equation}\label{3.1}
\xymatrix{M \otimes M \otimes M \ar[r]^{1 \otimes m} \ar[d]_{m \otimes 1} & M \otimes M \ar[d]^m \\
M \otimes M \ar[r]_m & M}
\end{equation}
\[\] 
\begin{equation}\label{3.2}
\xymatrix{I \otimes M \ar[r]^{e \otimes 1} \ar[dr]_{1} & M \otimes M
\ar[d]^m
&& M \otimes M  \ar[d]_m & M \otimes I \ar[l]_{1 \otimes e} \ar[dl]^{1} \\
& M && M}
\end{equation}
\[\]
\begin{equation}\label{3.3}
\xymatrix{M \otimes M \ar[rd]_{m} \ar[r]^{tw} & M \otimes M \ar[d]^m \\
& M}
\end{equation}
There is a geometrical representation of expressions and equations in a symmetric monoidal category (see, for example, \cite{KSW97b}). The arrows $m$ and $e$ are represented respectively as:

\noindent\centerline{{\tt\setlength{\unitlength}{0.60pt}
\begin{picture}(260,60)
\thinlines
\put(50,17){$m$}
\path(0,30)(40,30)(40,0)(60,0)(80,20)(100,20)
\path(0,10)(40,10)(40,40)(60,40)(80,20)(100,20)
\path(180,20)(180,33)(200,33)(200,7)(180,7)(180,20)
\path(200,20)(240,20)
\put(-20,30){$M$}
\put(-20,0){$M$}
\put(100,15){$M$}
\put(245,15){$M$}
\put(185,15){$e$}
\end{picture}}}

\bigskip
\noindent Then axiom (\ref{3.1}) becomes

\centerline{{\tt\setlength{\unitlength}{0.60pt}
\begin{picture}(260,60)
\thinlines
\put(50,17){$m$}
\put(130,5){$m$}
\put(270,-3){$m$}
\put(350,9){$m$}
\put(190,5){$=$}
\path(0,30)(40,30)(40,0)(60,0)(80,20)(100,20)
\path(0,10)(40,10)(40,40)(60,40)(80,20)(100,20)
\path(100,20)(120,20)(120,30)(140,30)(160,10)(140,-10)(120,-10)(120,20)
\path(120,-5)(0,-5)
\path(160,10)(180,10)
\path(220,-15)(260,-15)
\path(220,10)(260,10)
\path(260,0)(260,-20)(280,-20)(300,0)(280,20)(260,20)(260,0)
%\path(220,15)(260,15)(260,45)(280,45)(300,25)(320,25)
\path(220,25)(340,25)(340,35)(360,35)(380,15)(360,-5)(340,-5)(340,25)
\path(340,0)(300,0)
\path(380,15)(400,15)
%\path(180,20)(180,33)(200,33)(200,7)(180,7)(180,20)
%\path(200,20)(240,20)
%\put(-20,30){$M$}
%\put(-20,0){$M$}
%\put(100,15){$M$}
%\put(245,15){$M$}
%\put(185,15){$e$}
\end{picture}}}

\vspace{30pt} 

If the arrow $e$ is omitted in the above definition we get the notion of \emph{commutative semigroup object} in $\cat{M}$. Dually, a \emph{cocommutative comonoid object} in a monoidal category
\cat{M} is a monoid $M$ in the dual category $\cat{M}^{op}$. That
is, an object $M$ with two arrows
\begin{displaymath}
\begin{array}{c}
c: M \to M \otimes M, \qquad e':M \to I \\
\end{array}
\end{displaymath}satisfying the axioms dual to the monoid axioms.  There is similarly an obvious notion of  cocommutative cosemigroup.

\begin{proposition}
If $X$ and $Y$ are respectively a cocommutative comonoid and a commutative monoid object of a
symmetric monoidal category \cat{M}, then $Hom(X,Y)$ has an induced commutative monoid structure with multiplication being

\begin{displaymath}
\begin{array}{c}
\bullet : \xymatrix@R=1pt{ Hom(X,Y) \times Hom(X,Y) \ar[r] & Hom(X,Y) \\
(A,B) \ar@{|->}[r] & f \bullet g := m (A \otimes B) c}
\\
\end{array}
\end{displaymath}
and identity 
$$\xymatrix{\mathbf{e}=X \ar[r]^{e'} & I \ar[r]^{e} & Y}.
$$ An analogous result holds for $X$ a cosemigroup, and $Y$ a semigroup.
\end{proposition}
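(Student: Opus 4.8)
The operation $\bullet$ is the convolution product, and the claim is that it makes $Hom(X,Y)$ a commutative monoid with unit $\mathbf{e}=e\circ e'$. The plan is to verify associativity, the two unit laws, and commutativity directly from the definition $A\bullet B=m(A\otimes B)c$, the only ingredients being the functoriality of $\otimes$ (the interchange law $(f\otimes g)(f'\otimes g')=(ff')\otimes(gg')$), the monoid axioms \eqref{3.1}--\eqref{3.3} on $Y$, the duals of these axioms on $X$, and the naturality of the symmetry $tw$. By Mac Lane's coherence theorem I may assume \cat{M} strict, so that the associativity and unit isomorphisms need not be tracked; each identity below then has a transparent reading as an equality of string diagrams of the kind drawn above for $m$ and $e$, and the algebraic manipulation is simply its linear transcription.

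For associativity I would expand both sides and pull the outer $c$ and $m$ through the tensor using interchange. This rewrites $(A\bullet B)\bullet C$ as $m(m\otimes 1)\big((A\otimes B)\otimes C\big)(c\otimes 1)c$, and $A\bullet(B\bullet C)$ as $m(1\otimes m)\big(A\otimes(B\otimes C)\big)(1\otimes c)c$. Associativity of $m$ (axiom \eqref{3.1}) gives $m(m\otimes 1)=m(1\otimes m)$, coassociativity of $c$ (the dual of \eqref{3.1}) gives $(c\otimes 1)c=(1\otimes c)c$, and strict associativity of $\otimes$ identifies the two middle factors; the two expressions therefore coincide. This triple bookkeeping --- invoking the $m$-axiom, the dual $c$-axiom, and the interchange law in the right order --- is the one place where care is needed, and is where I expect the only real risk of error.

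The unit laws and commutativity are then short. For $\mathbf{e}\bullet A$, factor $(\mathbf{e}\otimes A)=(e\otimes A)(e'\otimes 1)$, so that the counit law $(e'\otimes 1)c=1$ (dual of \eqref{3.2}) collapses the comultiplication and leaves $m(e\otimes A)=m(e\otimes 1)A=A$ by the unit law \eqref{3.2} for $m$; the identity $A\bullet\mathbf{e}=A$ is symmetric, using the remaining counit and unit laws. For commutativity, naturality of $tw$ gives $B\otimes A=tw\,(A\otimes B)\,tw$, whence $B\bullet A=m\,tw\,(A\otimes B)\,tw\,c$; now $m\,tw=m$ by \eqref{3.3} and $tw\,c=c$ by its dual, so $B\bullet A=m(A\otimes B)c=A\bullet B$.

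Finally, the semigroup statement follows from exactly the same computation with the unit and counit data, and the diagrams \eqref{3.2}, simply deleted: associativity never invokes $e$ or $e'$, and the commutativity argument uses only \eqref{3.3} and its dual, so no further work is required.
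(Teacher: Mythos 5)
Your proof is correct and follows essentially the same route as the paper's: associativity via the interchange law together with axiom (\ref{3.1}) and its dual, the unit laws via factoring $\mathbf{e}\otimes A$ and applying (\ref{3.2}) and its dual, and commutativity via naturality of $tw$ with (\ref{3.3}) and its dual. The only differences are cosmetic (which side of the associativity equation you expand first, and how you factor the unit computation), so no further comment is needed.
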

\begin{proof}

By the associativity of m and c, $\bullet$ is associative:
\begin{eqnarray}
\lefteqn{ A \bullet (B \bullet C)= m \bigg(  A \otimes \big( m ( B
\otimes C)c
\big) \bigg)c = m (1 \otimes m ) (A \otimes B \otimes C) (1 \otimes c)c= } \nonumber \\
& & {} = m (m \otimes 1 ) (A\otimes B \otimes C) (c \otimes 1)c= m
\bigg(  \big( m (A \otimes B) c \big ) \otimes C \bigg)c = ( A\bullet B ) \bullet C. \nonumber
\end{eqnarray}
The identity axiom follows since
\begin{displaymath}
\begin{array}{c}
A \bullet \mathbf{e}= m \big( A \otimes (e \, e') \big)c = m (1
\otimes e ) (A \otimes 1) (1 \otimes e')  c=
1 \, (A \otimes 1) \, 1= A\\
\mathbf{e} \bullet A = m \big( (e \, e') \otimes A \big) c= m (e
\otimes 1 ) (1 \otimes A) (e' \otimes 1) c = 1 \, (1 \otimes A) \, 1
= A.
\end{array}
\end{displaymath}
The commutative law follows since
\begin{eqnarray}
 A \bullet B &= m \big(  A \otimes  B\big)c &= m\cdot tw\cdot (A \otimes B ) \cdot c\nonumber\\
&=m\cdot(B \otimes A ) \cdot tw\cdot \ c   &= m (B \otimes A) c=  B
\bullet A . \nonumber
\end{eqnarray}
\end{proof}

\noindent It is useful to visualize the operation:

\centerline{{\tt\setlength{\unitlength}{0.60pt}
\begin{picture}(350,60)
\thinlines
\put(26,7){$A\bullet B$}
\put(275,-4){$B$}
\put(275,20){$A$}
\put(350,9){$m$}
\put(125,5){$=$}
\put(200,8){$c$}
\put(-28,3){$X$}
\put(103,3){$Y$}
\path(-10,10)(20,10)(20,30)(70,30)(70,-10)(20,-10)(20,10)
\path(70,10)(100,10)
%\path(220,-15)(260,-15)
%
\path(220,-10)(220,35)(200,35)(180,13)(200,-10)(220,-10)
\path(180,13)(150,13)
\path(220,0)(260,0)
\path(260,0)(260,-10)(300,-10)(300,0)(300,10)(260,10)(260,0)
%\path(220,15)(260,15)(260,45)(280,45)(300,25)(320,25)
\path(220,25)(260,25)(260,35)(300,35)(300,15)(260,15)(260,25)
\path(300,25)(340,25)(340,-10)(360,-10)(380,15)(360,35)(340,35)(340,15)
\path(340,0)(300,0)
\path(380,15)(400,15)
%\path(180,20)(180,33)(200,33)(200,7)(180,7)(180,20)
%\path(200,20)(240,20)
%\put(-20,30){$M$}
%\put(-20,0){$M$}
%\put(100,15){$M$}
%\put(245,15){$M$}
%\put(185,15){$e$}
\end{picture}}}

%--------------------------------------------------------------------------------
\section{\TLTS\  and its algebraic structure}

\subsection{Reflexive graphs}

A \emph{graph} $X$ consists of a set $X_0$ of vertices of $X$, a set
$X_1$ of edges and two functions $d_0,d_1:X_1 \to X_0$ (domain and
codomain functions). A \emph{reflexive graph} $X$ is
a graph with a function $\varepsilon  : X_0 \to X_1 $ such that $d_0 \varepsilon  = d_1
\varepsilon $. For every $x \in X_0$, $\varepsilon  _x$ is the \emph{reflexive edge} of
$x$.
 
Let $X$ and $Y$ be two reflexive graphs. A \emph{morphism of reflexive graphs}
$\phi:X \to Y$ is a graph morphism such that $\phi( \varepsilon _x)=\varepsilon _{\phi(x)} $. 

\noindent The \emph{product} $X \times Y$ is the reflexive graph such that
\[(X \times Y)_0 := X_0 \times Y_0, \quad  (X \times Y)_1 := X_1
\times Y_1, \quad \varepsilon  _{X \times Y}:= \varepsilon  _ X \times\ \varepsilon  _Y. \]

\noindent We denote by $RGrph$ the category of reflexive graphs and
morphisms between them.

\subsection{The category of $\TLTS$}

The name $\TLTS$ comes from the fact that the arrows of $\TLTS$ are  \textquotedblleft two-sided labelled transition systems". 

\begin{definition} Given two sets  $X$, $Y$ both containing the symbol $\varepsilon,$ a \emph{two-sided transition system} $A$  \emph{labelled in } $X$ \emph{and} $Y$ consists of a set $A_0$ (of \emph{states}), and a subset  $A_{1}$  of $A_0\times X\times Y\times A_0$ (of \emph{transitions}) containing $(a,\varepsilon,\varepsilon,a)$ for each $a\in A_0$. 
\end{definition}
It is convenient sometimes to write the transition $(a,x,y,a')$ as $(a,x/y,a')$ or even $\xymatrix{a \ar[r]^{x/y} & aì'}.
$
\medskip

\noindent \emph{Strictly speaking we regard two transition systems $A$, $B$ with labels in $X$ and $Y$ as the same if $A_0$ is bijective with $B_0$ and the bijection respects edges and their labelling}.

\bigskip We now define the category $\TLTS$.

\begin{definition}The \emph{objects} of $\TLTS$  are sets containing the symbol $\varepsilon$, which we may think of as alphabets. Given objects $X$, $Y$ an \emph{arrow} $A$ from $X$ to $Y$   is a two-sided transition system labelled in $X$ and $Y$. The \emph{composition} $A\cdot B\ $ of $A:X\to Y$ and $B:Y\to Z$ is defined to be
$$(A\cdot B )_{0}=A_{0}\times B_0,$$
$$(A\cdot B)_1=\{(a,b,x/z,a',b');\exists y\in Y {\rm such\ that\ } (a,x/y,a')\in A_0,(b,y/z,b')\in B_{0}\}.$$
The identity arrow of $X$ has one state $*$ and transitions $\{(*,x/x,*);x\in X\}. $  
\end{definition}
The category $\TLTS$ bears a straightforward relationship with $Span(RGraph)$ - it is in fact a full subcategory of a quotient of $Span(RGraph)$, analogous to the fact that the category of relations is a quotient of $Span(Sets)$.  However we have preferred here to define $\TLTS$ explicitly.
To see how an arrow of $\TLTS$ may be considered a span of reflexive graphs one must first regard the objects as one vertex graphs, the alphabet being the set of edges, including $\varepsilon$ as the reflexive edge. Further given an  arrow $A:X\to Y$ in $\TLTS$ the two sets $A_1$ and $A_0$ form the arrows and edges of a graph; the two functions $d_0, d_1$ are defined by $d_0(a,x/y,b)=a,\  d_1(a,x/y,b)=b. $   Finally the arrow $A:X\to Y$ of $\TLTS$ yields a span of reflexive graphs
\begin{equation}\label{span1}
\xymatrix{X & A \ar[l]_{\delta_0 ^ A} \ar[r]^{\delta_1 ^ A} & Y }
\end{equation}
defined by $\delta_0(a,x/y,b)=x,\ \delta_1(a,x/y,b)=y$.
Composition in $\TLTS$ is composition in $Span(RGraph)$, followed by the reflection of general spans into spans jointly monic on arcs.
\bigskip

\noindent{\bf Examples} To see examples of two-sided labelled transition systems and their use in modelling concurrent systems, we refer to papers of the authors, beginning with \cite{KSW97b}.

\subsection{Relations} We will now see that the category $Rel_*$ of relations between pointed sets is a subcategory of $\TLTS$. Given a relation $\rho$ between two pointed sets $X$ and $Y$ (the points both denoted $\varepsilon$) with the property that $\varepsilon\rho\varepsilon$,  we obtain an arrow $\bar \rho:X\to Y$ of $\TLTS$ as follows: $\bar\rho_0=\{*\}$, $\bar\rho_1=\{(*,x/y,*);x\rho y\}. $ It is immediately clear that composition of relations in $\TLTS$ agrees with the usual composition of relations. 

\medskip

\subsection{The well-supported compact closed  structure of $\TLTS$}

 Since $RGrph$ has finite limits, $RGrph$ is a monoidal
category. The tensor product $\otimes$ of reflexive graphs is their
product. Each object $X$ of $RGrph$ has a structure of commutative
monoid in $Span(RGrph)$. In fact the spans
\begin{displaymath}
\nabla:= (\Delta_X,1_X): \; X \otimes X \to X \qquad e:=(!,1): \; I
\to X
\end{displaymath}
satisfy the axioms of the definition of monoid object, and the
multiplication $\nabla$ is compatible with the twist map.

Each reflexive graph $X$ has also a structure of commutative
comonoid. The comultiplication is \[\Delta:= (1, \Delta_X): \; X \to
X \otimes X\]

There is a symmetric monoidal structure on spans. Given two spans
$A:X \to Y$, $B:Z \to W$ the \emph{tensor} of $A$ and $B$ is defined
as
\[A \otimes B := (\delta_0 ^A \times
\delta _0 ^B,\delta_1 ^A \times \delta _1 ^B):\; X \otimes Z \to Y
\otimes W\] Given two objects $X,Y$ there is a \emph{twist} span
\begin{displaymath}
\xymatrix{X \times Y & X \times Y \ar[l]_{1} \ar[r]^{tw} & Y \times
X}
\end{displaymath}
where $tw$ is the twist map in $RGrph$.

\begin{definition}
A \emph{wscc category} is a symmetric monoidal category with for each object $X$ the structure of commutative monoid and comonoid satisfy the following axioms:
\begin{enumerate}[i)]
\item Frobenius axiom: $(\nabla \otimes 1)(1 \otimes \Delta)=\Delta \nabla $
\medskip
\begin{displaymath}
\begin{array}{ccccc}
\xymatrix@C=15pt@R=3pt{&& *=0{} \ar@{-}[rrr] &&& *=0{}\\
*=0{} \ar@{-}[r] & *=0{} \ar@{-}[ur] \ar@{-}[dr]\\
&& *=0{} \ar@{-}[r] & *=0{}\\
&&&& *=0{} \ar@{-}[ul] \ar@{-}[dl] & *=0{} \ar@{-}[l]\\
*=0{} \ar@{-}[rrr] &&& *=0{}  }
&& \xymatrix@C=10pt@R=3pt{\\\\=} &&
\xymatrix@C=15pt@R=20pt{*{} \ar@{-}[r] & *{} \ar@{-}[drr] && *{} \ar@{-}[r] & *{}\\
*{} \ar@{-}[r] & *{} \ar@{-}[urr] && *{} \ar@{-}[r] & *{}}
\end{array}
\end{displaymath}
\item Separable axiom: \medskip $\nabla \Delta=1$
\begin{displaymath}
\begin{array}{ccccc}
\xymatrix@C=15pt@R=3pt{&& *=0{} \ar@{-}[r] & *=0{}\\
*=0{} \ar@{-}[r] & *=0{} \ar@{-}[ur] \ar@{-}[dr] &&& *=0{}\ar@{-}[r] \ar@{-}[ul] \ar@{-}[dl]& *=0{}\\
&& *=0{} \ar@{-}[r] & *=0{} }
&& \xymatrix@C=10pt@R=3pt{\\=} &&
\xymatrix@C=15pt@R=3pt{\\*{} \ar@{-}[rrr] &&& *{} }
\end{array}
\end{displaymath}
\end{enumerate}
\end{definition}

Derived operations are:
\begin{itemize}
\item the \emph{projection} $\xymatrix{X \otimes Y \ar[r]^{X \otimes \text{\textexclamdown}} & X \otimes I \ar[r]^-{1} & X}$,
\item the \emph{opposite projection} $\xymatrix{I \ar[r]^1 & X \otimes I \ar[r]^{X \otimes !} & X \otimes Y }$,
\item the \emph{unit} $\eta _X: \xymatrix{I \ar[r]^{!} & X \ar[r]^-{\Delta} & X \otimes X}$,
\item the \emph{counit} $\epsilon_X: \xymatrix{X \otimes X \ar[r]^{\nabla} & X \ar[r]^{\text{\textexclamdown}} & I}$.
\\
\end{itemize}

\subsection{Monoids in $Rel_*$}What is a commutative monoid in $\TLTS$ in which the structure arrows of the monoid are relations?
It is easy to verify the following proposition:

\begin{proposition}

A commutative monoid structure in $\TLTS$ on object $X,$ for which all the structure maps are pointed relations, amounts to (i) a subset $e$ of $X$ containing $\varepsilon$,  (ii) a function $m:X \times\ X\to\wp(X)$ satisfying,
for all $x,y,z$ in $X$, $m(e,x)=\{x\}$, $m(x,e)=\{x\}$, $m(x,y)=m(y,x) $ and   $m(x,y),z))=m(m(x,m(y,z))$. Notice that the conditions involve extending the definition of $m$ in the obvious way to subsets of $X$. 
\end{proposition}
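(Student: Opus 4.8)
The plan is to unwind each axiom of a commutative monoid object in $\TLTS$ under the restriction that the structure maps $m:X\otimes X\to X$ and $e:I\to X$ are pointed relations, using the dictionary established in the subsection on relations: a pointed relation $\rho$ between pointed sets corresponds to a one-state arrow $\bar\rho$ of $\TLTS$, with composition of relations matching composition in $\TLTS$. Since $X\otimes X$ is the product of alphabets (with point $(\varepsilon,\varepsilon)$), a relation $m$ from $X\times X$ to $X$ is exactly a subset of $(X\times X)\times X$, which I would present as a multivalued function $m:X\times X\to\wp(X)$ by setting $m(x,y)=\{z : (x,y)\mathrel{m} z\}$; the constraint $\varepsilon\,m\,\varepsilon$ becomes $\varepsilon\in m(\varepsilon,\varepsilon)$. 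Likewise the unit $e:I\to X$, being a relation from the one-point pointed set to $X$, is simply a pointed subset $e\subseteq X$ containing $\varepsilon$.

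The core of the argument is to translate the three commuting diagrams (\ref{3.1}), (\ref{3.2}), (\ref{3.3}) into their relational meaning. First I would observe the book-keeping remark the proposition already flags: because composition in $\TLTS$ of one-state systems is relational composition, every occurrence of $m$ applied in series must be read with $m$ extended to subsets, i.e. $m(S,z)=\bigcup_{x\in S}m(x,z)$ and similarly on the other side. With that convention in hand, the associativity square becomes the equality of relations $m(m(x,y),z)=m(x,m(y,z))$ for all $x,y,z$; the commutativity triangle, using that the twist map on the product of pointed sets is the genuine swap relation, becomes $m(x,y)=m(y,x)$; and the two unit triangles, using that $e\otimes 1$ followed by $m$ amounts to relational composition through the subset $e$, become $m(e,x)=\{x\}$ and $m(x,e)=\{x\}$ (again with the subset extension of $m$ absorbing the fact that $e$ may contain several elements). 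I would verify each of these by writing out the transitions on both sides of the corresponding composite in $\TLTS$ and checking they coincide as subsets.

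The one genuinely delicate point, and the step I expect to be the main obstacle, is the unit condition. Naively $e$ is a morphism $I\to X$, hence a pointed subset of $X$; but the unit triangles say $m(e,-)=1_X$, and since $1_X$ as a relation is the diagonal and $m(e,x)=\bigcup_{a\in e}m(a,x)$, one must check that the only way this union can equal exactly $\{x\}$ for every $x$ while $e$ remains a subset rather than a singleton is precisely the stated condition $m(e,x)=\{x\}$ read with the subset extension. I would argue carefully that the proposition's phrasing, namely listing $e$ as a subset containing $\varepsilon$ together with $m(e,x)=\{x\}$, is the faithful relational transcription, and that no strengthening to $e=\{\varepsilon\}$ is forced. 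The remaining associativity and commutativity translations are then routine diagram-chases once the subset-extension convention is fixed, so the whole proof reduces to the careful relational reading of the unit axiom plus mechanical verification of the other two, which justifies the paper's assertion that the proposition is easy to verify.
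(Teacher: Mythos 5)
Your proposal is correct and is exactly the verification the paper has in mind: the paper offers no written proof (it declares the proposition ``easy to verify''), and your direct unwinding of the one-state arrows into pointed relations, with diagrams (\ref{3.1})--(\ref{3.3}) translated into the subset-extended identities for $m$ and the unit read as a pointed subset $e$, is the intended routine argument. Your careful handling of the unit law (noting that $m(e,x)=\{x\}$ with $e$ a subset is the faithful transcription and that $e$ need not collapse to $\{\varepsilon\}$) is a worthwhile clarification of a point the paper glosses over.
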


\section{Parallel composition in Process Algebras based on broadcast}

It is undoubtedly true that one of the most common ways of connecting components is by broadcast; that is, each component may communicate which any other directly. The geometry is \emph{not} 
$$% Drawing generated by LaTeX-CAD 1.9 - requires latexcad.sty 
% (c) 1998 John Leis leis@usq.edu.au 
\centerline{\tt\setlength{\unitlength}{3.5pt}
\begin{picture}(42,16)\thinlines
\put(13,8){$A$}
\put(27,8){$B$}
%\put(35,13.5){$C$}
\drawpath{4.0}{8.0}{10.0}{8.0}
\drawpath{10.0}{8.0}{10.0}{12.0}
\drawpath{10.0}{12.0}{18.0}{12.0}
\drawpath{18.0}{12.0}{18.0}{4.0}
\drawpath{18.0}{4.0}{10.0}{4.0}
\drawpath{10.0}{4.0}{10.0}{8.0}
\drawpath{18.0}{8.0}{24.0}{8.0}
\drawpath{24.0}{8.0}{24.0}{12.0}
\drawpath{24.0}{12.0}{32.0}{12.0}
\drawpath{32.0}{12.0}{32.0}{4.0}
\drawpath{32.0}{4.0}{24.0}{4.0}
\drawpath{24.0}{4.0}{24.0}{8.0}
\drawpath{24.0}{8.0}{24.0}{8.0}
\drawpath{24.0}{8.0}{24.0}{4.0}
\drawpath{24.0}{4.0}{32.0}{4.0}
\drawpath{32.0}{4.0}{32.0}{8.0}
\drawpath{32.0}{8.0}{38.0}{8.0}
\end{picture}} $$but something like
$$% Drawing generated by LaTeX-CAD 1.9 - requires latexcad.sty 
% (c) 1998 John Leis leis@usq.edu.au 
\tt\setlength{\unitlength}{6.0pt}
\begin{picture}(34,18)
\thinlines
\put(9,13.5){$A$}
\put(17.5,13.5){$B$}
\put(25,13.5){$C$}
%\put(51,13.5){$Z$}
%\put(30,13.5){$\cdots$}
%\put(45,13.5){$\cdots$}
\drawpath{4.0}{8.0}{32.0}{8.0}
\drawpath{8.0}{16.0}{8.0}{12.0}
\drawpath{8.0}{12.0}{12.0}{12.0}
\drawpath{12.0}{12.0}{12.0}{16.0}
\drawpath{12.0}{16.0}{8.0}{16.0}
\drawpath{8.0}{16.0}{8.0}{12.0}
\drawpath{8.0}{12.0}{10.0}{12.0}
\drawpath{10.0}{12.0}{10.0}{8.0}
\drawpath{10.0}{8.0}{18.0}{8.0}
\drawpath{18.0}{8.0}{18.0}{12.0}
\drawpath{18.0}{12.0}{20.0}{12.0}
\drawpath{20.0}{12.0}{20.0}{16.0}
\drawpath{20.0}{16.0}{16.0}{16.0}
\drawpath{16.0}{16.0}{16.0}{12.0}
\drawpath{16.0}{12.0}{18.0}{12.0}
\drawpath{18.0}{12.0}{18.0}{8.0}
%\drawpath{18.0}{8.0}{26.0}{8.0}
%\drawpath{26.0}{4.0}{26.0}{12.0}
\drawpath{26.0}{12.0}{28.0}{12.0}
\drawpath{28.0}{12.0}{28.0}{16.0}
\drawpath{28.0}{16.0}{24.0}{16.0}
\drawpath{24.0}{16.0}{24.0}{12.0}
\drawpath{24.0}{12.0}{26.0}{12.0}
\drawpath{26.0}{12.0}{26.0}{8.0}
%\drawpath{26.0}{8.0}{52.0}{8.0}
%\drawpath{52.0}{8.0}{52.0}{12.0}
%\drawpath{52.0}{12.0}{54.0}{12.0}
%\drawpath{54.0}{12.0}{54.0}{16.0}
%\drawpath{54.0}{16.0}{50.0}{16.0}
%\drawpath{50.0}{16.0}{50.0}{12.0}
%\drawpath{50.0}{12.0}{52.0}{12.0}
\end{picture}
$$
 We might call the bottom line here a \emph{bus}. The components can talk directly to each other through this medium, though naturally in interleaving.  
 We maintain however that the first geometry above is the canonical one, while the second is a special derived operation,  namely, in terms of the wscc operations of $\TLTS, $ 

$$\nabla\;(C\otimes 1)\;\nabla\;(B\otimes 1)\;A=\nabla\;(C\otimes(\nabla\;(B\otimes A)))   $$
or geometrically
$$% Drawing generated by LaTeX-CAD 1.9 - requires latexcad.sty 
% (c) 1998 John Leis leis@usq.edu.au 
\centerline{\tt\setlength{\unitlength}{4.5pt}
\begin{picture}(64,20)
\thinlines
\put(9,13.5){$A$}
\put(23,13.5){$B$}
\put(35,13.5){$C$}
\drawpath{18.0}{4.0}{55.0}{4.0}
\drawpath{8.0}{16.0}{8.0}{12.0}
\drawpath{8.0}{12.0}{12.0}{12.0}
\drawpath{12.0}{12.0}{12.0}{16.0}
\drawpath{12.0}{16.0}{8.0}{16.0}
\drawpath{8.0}{16.0}{8.0}{12.0}
\drawpath{8.0}{12.0}{10.0}{12.0}
\drawpath{10.0}{12.0}{18.0}{4.0}
\drawpath{22.0}{16.0}{22.0}{12.0}
\drawpath{22.0}{12.0}{26.0}{12.0}
\drawpath{26.0}{12.0}{26.0}{16.0}
\drawpath{26.0}{16.0}{22.0}{16.0}
\drawpath{22.0}{16.0}{22.0}{12.0}
\drawpath{22.0}{12.0}{24.0}{12.0}
\drawpath{24.0}{12.0}{32.0}{4.0}
\drawpath{32.0}{4.0}{42.0}{4.0}
\drawpath{44.0}{4.0}{36.0}{12.0}
\drawpath{36.0}{12.0}{34.0}{12.0}
\drawpath{34.0}{12.0}{34.0}{16.0}
\drawpath{34.0}{16.0}{38.0}{16.0}
\drawpath{38.0}{16.0}{38.0}{12.0}
\drawpath{38.0}{12.0}{36.0}{12.0}
\end{picture}
} $$
This expression certainly acts like pure broadcast: in a transition of the whole systems the transitions of each component must have the same label on the "bus". Note that we could have as easily, and perhaps more naturally, used $\Delta$ rather than $\nabla$; however the comparison with synchronization algebras is simpler using $\nabla$.
 
\medskip\  What we will describe next is a modification of pure broadcast in which there is a protocol between the processes and the bus.

\medskip\ 
\subsection{Classical labelled transition systems and synchronization algebras}

Let $\mathcal{L}$ be an alphabet which does not include the symbols
$0$ and $\varepsilon$. Define \[\mathcal{L}_{\epsilon}:= \mathcal{L} \cup \{\varepsilon\}
\qquad \mathcal{L}_{\varepsilon ,0}:= \mathcal{L} \cup \{\varepsilon,0 \}\]

\begin{definition}\cite{WN} A \emph{synchronization algebra} on $\mathcal{L}$ is a binary,
commutative and associative operation $\diamond$ on $\mathcal{L}
_{\varepsilon,0 }$ such that for all $\alpha,\beta \in \mathcal{L}_ {\varepsilon,0 }$

\begin{enumerate}
\item[(i)] $\alpha \diamond\ 0=0$,
\item[(ii)] $\alpha \diamond \beta = \varepsilon \;$ if and only if $\; \alpha=\beta=\varepsilon.$
\end{enumerate}
\end{definition}
The idea is that the element $0$ denotes those synchronizations which are not
allowed,  $\varepsilon$ allows asynchrony, and $\alpha\diamond \beta$ is the resulting signal on the bus when messages $\alpha$ and $\beta$ are passed from components.
\par\bigskip
\noindent{\bf Remark\par} If we regard $\alpha\diamond \beta=0$ as meaning that  $\alpha\diamond \beta$ is undefined, then we may think of the operation of  a synchronization algebra as being a partial function. Property (i) assures us that no information is lost. Then clearly by Proposition 2 a synchronization algebra is a commutative semigroup object in $Rel_*$, and hence in $\TLTS$.
In fact a synchronization algebra on alphabet $\mathcal{L}$ is the same thing as a commutative semigroup object in $Rel_*$ on object $\mathcal{L}_{\varepsilon}$ whose multiplication is a partial function, and which satisfies the \emph{additional} property that $\alpha\diamond \beta=\varepsilon$ implies $\alpha=\beta=\varepsilon$.  
\bigskip

As usual, a \emph{transition system} $A$ labelled in $\cal{L}$ consists of  a set $S$ of \emph{states},
and a \emph{transition relation} $T \subseteq S \times \mathcal{L} \times S$.

\medskip
\begin{definition}
\cite{WN} Let $A=(S_A, \mathcal{L},T_A)$, $B=(S_B, \mathcal{L},T_B)$ be
transition systems on the same alphabet $\mathcal{L}$. Given a
synchronization algebra $\diamond$ on $\mathcal{L}$, the
\emph{parallel composition} of $A$ and $B$ is the transition system
$%\begin{displaymath}
A \parallel B := ( \, S_{\parallel}, \; \mathcal{L}, \;
T_{\parallel} \; )
$%\end{displaymath}
where:
\begin{enumerate}
\item[] $S_{\parallel}:=S_A \times S_B$,
\item[] $T_{\parallel}:= \{ \big( (a,b),\lambda,(a',b') \big) \; | \; \lambda=\alpha
\diamond \beta  \neq 0, \; (x \stackrel{\alpha}{\rightarrow} x', y
\stackrel{\beta}{\rightarrow} y'  ) \in T_{\times}\}$,
\end{enumerate}
where 
$T_{ \times }= T_A \times T_B+ T_A \times \{b \stackrel{\varepsilon}{\rightarrow} b \;|\; b \in
S_B\} + \{a \stackrel{\varepsilon}{\rightarrow} a \;|\; a \in S_A\} \times
T_B$.
\end{definition}
\bigskip

It is straightforward to check the following proposition:

\begin{proposition}
Consider a synchronization algebra $\diamond$ on the alphabet $\mathcal{L}$, considered now as a commutative semigroup object in $\TLTS$. Let $\bullet$ be the commutative semigroup operation induced on $Hom(I,\mathcal{L}_{\varepsilon})$, as in Proposition 1 ($I$ has a trivial cocommutative comonoid structure). Then

$$A\parallel B=A\bullet B. $$

\end{proposition}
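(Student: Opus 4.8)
The plan is to unwind both sides of the claimed equation explicitly as two-sided transition systems and to check that they have the same states and the same transitions. First I would fix the translation dictionary. The tensor unit $I$ is the one-element alphabet $\{\varepsilon\}$, and an arrow $A\colon I\to\mathcal{L}_\varepsilon$ of $\TLTS$ is a two-sided transition system whose input label is forced to be $\varepsilon$; thus its transitions are exactly the pairs $(a,\varepsilon/\alpha,a')$ with $\alpha\in\mathcal{L}_\varepsilon$, together with the obligatory reflexive edges $(a,\varepsilon/\varepsilon,a)$. This identifies $Hom(I,\mathcal{L}_\varepsilon)$ with classical transition systems labelled in $\mathcal{L}$: a genuine transition $a\xrightarrow{\alpha}a'$ of $T_A$ (so $\alpha\in\mathcal{L}$) becomes $(a,\varepsilon/\alpha,a')$, while the reflexive edges correspond to the idle moves $a\xrightarrow{\varepsilon}a$.

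Next I would compute $A\bullet B=m\,(A\otimes B)\,c$ by unwinding the three pieces. The comultiplication $c\colon I\to I\otimes I$ is the trivial (identity) arrow on the terminal object $I=I\otimes I$, so it contributes nothing beyond bookkeeping. The tensor $A\otimes B\colon I\otimes I\to\mathcal{L}_\varepsilon\otimes\mathcal{L}_\varepsilon$ has state set $A_0\times B_0$ and transitions $((a,b),\varepsilon/(\alpha,\beta),(a',b'))$ with $(a,\varepsilon/\alpha,a')\in A_1$ and $(b,\varepsilon/\beta,b')\in B_1$; because each factor carries its own reflexive edges, this pairing ranges over all four combinations of ``$A$ moves or idles'' with ``$B$ moves or idles''. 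Finally $m\colon\mathcal{L}_\varepsilon\otimes\mathcal{L}_\varepsilon\to\mathcal{L}_\varepsilon$ is the one-state relation with $m_1=\{(*,(\alpha,\beta)/\lambda,*):\lambda=\alpha\diamond\beta\neq 0\}$. Carrying out the two $\TLTS$ compositions (matching the intermediate labels at each stage) then yields state set $A_0\times B_0=S_{\parallel}$ and transitions $((a,b),\varepsilon/\lambda,(a',b'))$ for which there exist $\alpha,\beta$ with $(a,\varepsilon/\alpha,a')\in A_1$, $(b,\varepsilon/\beta,b')\in B_1$ and $\lambda=\alpha\diamond\beta\neq 0$.

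Comparing this with Winskel's $T_{\parallel}$ is then a matter of matching the case analysis. The three summands of $T_{\times}$ are precisely the three combinations in which at least one component makes a genuine move, and under the dictionary above these correspond exactly to the pairings in $A\otimes B$ in which at least one factor uses a non-reflexive edge; in each such case the side-condition $\lambda=\alpha\diamond\beta\neq 0$ is literally identical on the two sides, and property (ii) of the synchronization algebra guarantees $\lambda\neq\varepsilon$, so these are genuine $\mathcal{L}$-transitions on both sides.

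The one point requiring care, and the step I expect to be the main obstacle, is reconciling the fourth combination. Winskel deliberately omits the ``both idle'' pairing from $T_{\times}$, whereas $A\otimes B$ necessarily contains it through the two reflexive edges; composing with $m$ sends it to $\lambda=\varepsilon\diamond\varepsilon=\varepsilon$, that is, to the transition $((a,b),\varepsilon/\varepsilon,(a,b))$. I would argue that this is exactly the reflexive edge that the two-sided transition system $A\bullet B$ is \emph{required} to carry, and that, by property (ii) together with the fact that $\alpha=\varepsilon$ in $A_1$ forces $a=a'$, it is the \emph{only} transition of $A\bullet B$ with output label $\varepsilon$; hence it contributes no spurious $\mathcal{L}$-transition, and the two systems agree on every genuine transition while both carry precisely the mandatory reflexive edges. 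Finally, since $m$ and $c$ are (partial) functions, the composite is automatically jointly monic on arcs, matching the fact that $T_{\parallel}$ is recorded as a set rather than a multiset, so the identification respects the convention under which arrows of $\TLTS$ are deemed equal.
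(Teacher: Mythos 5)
Your proposal is correct, and it is precisely the direct unwinding that the paper leaves to the reader with the remark ``It is straightforward to check the following proposition'' --- the paper supplies no further argument. You also correctly isolate the only delicate point, namely that the both-idle pairing (absent from Winskel's $T_{\times}$) is sent by $m$ to the mandatory reflexive edge $((a,b),\varepsilon/\varepsilon,(a,b))$ and, by property (ii) of the synchronization algebra, produces no spurious $\mathcal{L}$-labelled transition.
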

In the geometry of expressions in the wscc category $\TLTS$, $A\parallel B$ is:
$$
% Drawing generated by LaTeX-CAD 1.9 - requires latexcad.sty 
% (c) 1998 John Leis leis@usq.edu.au
\centerline{\tt\setlength{\unitlength}{3pt} 
\begin{picture}(36,24)
\thinlines
\put(5,17.0){$A$}
\put(5,5.0){$B$}
\put(25,7.5){$m$}
\drawpath{4.0}{20.0}{4.0}{16.0}
\drawpath{4.0}{16.0}{8.0}{16.0}
\drawpath{8.0}{16.0}{8.0}{20.0}
\drawpath{8.0}{20.0}{4.0}{20.0}
\drawpath{4.0}{20.0}{8.0}{20.0}
\drawpath{8.0}{20.0}{8.0}{18.0}
\drawpath{8.0}{18.0}{16.0}{18.0}
\drawpath{16.0}{18.0}{24.0}{14.0}
\drawpath{24.0}{14.0}{24.0}{10.0}
\drawpath{24.0}{10.0}{16.0}{6.0}
\drawpath{16.0}{6.0}{8.0}{6.0}
\drawpath{18.0}{10.0}{18.0}{10.0}
\drawpath{8.0}{6.0}{8.0}{8.0}
\drawpath{8.0}{8.0}{4.0}{8.0}
\drawpath{4.0}{8.0}{4.0}{4.0}
\drawpath{4.0}{4.0}{8.0}{4.0}
\drawpath{8.0}{4.0}{8.0}{6.0}
\drawpath{24.0}{14.0}{26.0}{12.0}
\drawpath{26.0}{12.0}{24.0}{10.0}
\drawpath{24.0}{10.0}{26.0}{12.0}
\drawpath{26.0}{12.0}{32.0}{12.0}
\end{picture}
}
$$
It is also straightforward to see that $A\parallel B\parallel C$ has the following geometry, analogous to the example of pure broadcast above:

$$% Drawing generated by LaTeX-CAD 1.9 - requires latexcad.sty 
% (c) 1998 John Leis leis@usq.edu.au 
\centerline{\tt\setlength{\unitlength}{3.5pt}
\begin{picture}(64,24)
\thinlines
\drawpath{18.0}{8.0}{60.0}{8.0}
\drawpath{8.0}{20.0}{8.0}{16.0}
\drawpath{8.0}{16.0}{12.0}{16.0}
\drawpath{12.0}{16.0}{12.0}{20.0}
\drawpath{12.0}{20.0}{8.0}{20.0}
\drawpath{8.0}{20.0}{8.0}{16.0}
\drawpath{8.0}{16.0}{10.0}{16.0}
\drawpath{10.0}{16.0}{18.0}{8.0}
\drawpath{22.0}{20.0}{22.0}{16.0}
\drawpath{22.0}{16.0}{26.0}{16.0}
\drawpath{26.0}{16.0}{26.0}{20.0}
\drawpath{26.0}{20.0}{22.0}{20.0}
\drawpath{22.0}{20.0}{22.0}{16.0}
\drawpath{22.0}{16.0}{24.0}{16.0}
\drawpath{24.0}{16.0}{32.0}{8.0}
\drawpath{32.0}{8.0}{42.0}{8.0}
\drawpath{44.0}{8.0}{36.0}{16.0}
\drawpath{36.0}{16.0}{34.0}{16.0}
\drawpath{34.0}{16.0}{34.0}{20.0}
\drawpath{34.0}{20.0}{38.0}{20.0}
\drawpath{38.0}{20.0}{38.0}{16.0}
\drawpath{38.0}{16.0}{36.0}{16.0}
%\drawpath{16.0}{10.0}{16.0}{8.0}
\drawpath{30.0}{10.0}{30.0}{8.0}
\drawpath{42.0}{10.0}{42.0}{8.0}
\drawcenteredtext{10.0}{18.0}{$A$}
\drawcenteredtext{24.0}{18.0}{$B$}
\drawcenteredtext{36.0}{18.0}{$C$}
%\drawcenteredtext{16.0}{6.0}{$m$}
\drawcenteredtext{30.0}{6.0}{$m$}
\drawcenteredtext{42.0}{6.0}{$m$}
\end{picture}
}
$$
The expression here is $m(C\otimes 1)m(B\otimes 1)A.\medskip $
\par
Notice that $A\parallel B\parallel C$=$(C\parallel A) \parallel B$ and hence $C$ may communicate directly with $A$, and also that the order on the bus is irrelevant.

%----------------------------------------------------------
\subsection{Examples}

\subsubsection{Pure Broadcast} We have already discussed this case which arises from the comonoid structure of objects which is part of the wscc structure of $\TLTS$. The comultiplication is the arrow 
\[\nabla : \mathcal{L}_{\varepsilon}\otimes \mathcal{L}_{\varepsilon} \to \mathcal{L}_{\varepsilon}\]
which is actually the partial function $\mathcal{L}_{\varepsilon}{\times} \mathcal{L}_{\varepsilon}\stackrel{\Delta}{\longleftarrow} \mathcal{L}_{\varepsilon}\stackrel{1}{\longrightarrow} \mathcal{L}_{\varepsilon}$.
Notice however that the unit of the monoid structure is not a partial function, which means that the synchronization algebra is only a semigroup. It is our view that the extension of the notion of synchronization algebra to monoids in $Rel_*$ is important. 

\subsubsection{CCS}

The  alphabet $\mathcal{L}$ 
contains a special  letter $\tau$ and to each other letter $\alpha \in \mathcal{L}$,
$\alpha \neq \tau$, there exists the complementary label $\bar{\alpha} \in
\mathcal{L}$. The multiplication $\diamond$ on $\mathcal{L}_{\varepsilon}$ is the partial map defined as follows:
\begin{enumerate}
\item[(i)] $\alpha\diamond\varepsilon=\varepsilon\diamond\alpha=\alpha$ for all $\alpha$ (including $\tau$),
\item[(ii)] $\alpha\neq\tau$, implies that $\alpha\diamond\bar{\alpha}=\tau=\bar{\alpha}\diamond{\alpha}$,
\item[(iii)] on all other pairs $\diamond$ is undefined.
\end{enumerate}
This multiplication does have an identity element, namely the element $\varepsilon$.
\subsubsection{Non-reflexive graphs and synchronization}  An important special case of broadcast is the clock signal in synchronous machines. The clock has one vertex and one non-reflexive edge, the clock signal. 
$$% Drawing generated by LaTeX-CAD 1.9 - requires latexcad.sty 
% (c) 1998 John Leis leis@usq.edu.au 
\centerline{\tt\setlength{\unitlength}{4.pt}
\begin{picture}(64,20)
\thinlines
\put(8.8,13.5){$clock$}
\put(23,13.5){$A$}
\put(35,13.5){$B$}
\put(40,13.5){$\cdots\cdots$}
\drawpath{18.0}{4.0}{55.0}{4.0}
\drawpath{8.0}{16.0}{8.0}{12.0}
\drawpath{8.0}{12.0}{15.0}{12.0}
\drawpath{15.0}{12.0}{15.0}{16.0}
\drawpath{15.0}{16.0}{8.0}{16.0}
\drawpath{8.0}{16.0}{8.0}{12.0}
\drawpath{8.0}{12.0}{10.0}{12.0}
\drawpath{10}{12.0}{18.0}{4.0}
\drawpath{22.0}{16.0}{22.0}{12.0}
\drawpath{22.0}{12.0}{26.0}{12.0}
\drawpath{26.0}{12.0}{26.0}{16.0}
\drawpath{26.0}{16.0}{22.0}{16.0}
\drawpath{22.0}{16.0}{22.0}{12.0}
\drawpath{22.0}{12.0}{24.0}{12.0}
\drawpath{24.0}{12.0}{32.0}{4.0}
\drawpath{32.0}{4.0}{42.0}{4.0}
\drawpath{44.0}{4.0}{36.0}{12.0}
\drawpath{36.0}{12.0}{34.0}{12.0}
\drawpath{34.0}{12.0}{34.0}{16.0}
\drawpath{34.0}{16.0}{38.0}{16.0}
\drawpath{38.0}{16.0}{38.0}{12.0}
\drawpath{38.0}{12.0}{36.0}{12.0}
\end{picture}
} $$
If each non-reflexive edge of  $A,B,C,\cdots$ is labelled by the clock signal, then this expression evaluates to the product, in non-reflexive Graphs, of the graphs consisting of the non-reflexive edges of $A$,$B$,$C,\cdots$, .

%----------------------------------------------------------

\end{document}